\numberwithin{equation}{section}
\DeclareMathOperator{\E}{\mathbb{E}}
\DeclareMathOperator{\supp}{supp}
\DeclareMathOperator{\sgn}{sgn}
\DeclareMathOperator*{\argmax}{argmax}
\DeclarePairedDelimiter \abs{\lvert}{\rvert} 
\DeclarePairedDelimiter \norm{\lVert}{\rVert}
\DeclarePairedDelimiterX \ip[2]{\langle}{\rangle}{#1,#2}
\DeclarePairedDelimiterXPP \Prob[1]{\mathbb{P}}\{\}{}{
\DeclarePairedDelimiterXPP \Probevent[1]{\mathbb{P}}(){}{
\def \R {\mathbb{R}}
\newtheorem{theorem}{Theorem}[section]
\newtheorem{lemma}[theorem]{Lemma}
\newtheorem{definition}[theorem]{Definition}
\theoremstyle{remark}
\newtheorem{remark}[theorem]{Remark}
\newtheorem{question}[theorem]{Question}
\begin{document}

\title{Can we spot a fake?}

\author{Shahar Mendelson}
\address{Texas A\&M University}
\email{shahar@tamu.edu}
\author{Grigoris Paouris}
\address{Texas A\&M University and Princeton University}
\email{grigoris@tamu.edu}
\author{Roman Vershynin}
\address{University of California, Irvine}
\email{rvershyn@uci.edu}

\begin{abstract}
    The problem of detecting fake data inspires the following seemingly simple mathematical question. Sample a data point $X$ from the standard normal distribution in $\mathbb{R}^n$. An adversary observes $X$ and corrupts it by adding a vector $rt$, where they can choose any vector $t$ from a fixed set $T$ of the adversary's ``tricks'', and where $r>0$ is a fixed radius. The adversary's choice of $t=t(X)$ may depend on the true data $X$. The adversary wants to hide the corruption by making the fake data $X+rt$ statistically indistinguishable from the real data $X$. What is the largest radius $r=r(T)$ for which the adversary can create an undetectable fake? We show that for highly symmetric sets $T$, the detectability radius $r(T)$ is approximately twice the scaled Gaussian width of $T$. The upper bound actually holds for arbitrary sets $T$ and generalizes to arbitrary, non-Gaussian distributions of real data $X$. The lower bound may fail for not highly symmetric $T$, but we conjecture that this problem can be solved by considering the focused version of the Gaussian width of $T$, which focuses on the most important directions of $T$.
\end{abstract}

\maketitle

\section{Introduction}

The ongoing AI boom is revolutionizing human-computer interaction. But not all interaction is good: generative AI has made it easy to create fake data. 

\begin{quote}
{\em When can we detect a fake?} 
\end{quote}

This general question can be nontrivial even in stunningly simple scenarios. Imagine we sample a data point $X$ from the standard normal distribution on $\R^n$. An adversary observes $X$ and corrupts it by adding a vector $rt$, where they can choose any vector $t=t(X) \in T$ from a fixed set $T \subset \R^n$ of adversary's ``tricks'', and where $r>0$ is a fixed radius. Allowing the adversary's choice of $t=t(X)$ to depend on $X$ allows the adversary to manipulate different data in different ways. The adversary wants to hide the corruption by making the fake data $X+rt$ statistically indistinguishable from the real data $X$. When can the adversary succeed?

\begin{question}[Detectability radius]	\label{q: detectability radius}
    {\em For a given set $T \subset \R^n$, what is the smallest radius $r(T)$ such that for any $r>r(T)$ we can always detect a fake with high probability?}
\end{question}

If $r$ is very small, the fake can never be detected, since the adversary can always succeed: a tiny shift of the normal distribution is close to the original normal distribution. If $r$ is very large, the fake can always be detected: a hugely shifted vector has an abnormally large length, and this can be easily detected. So where is the phase transition $r(T)$? And what are the optimal strategies for the tester and for the adversary?

\subsection{A geometric viewpoint}
Any test can be encoded by a subset $A \subset \R^n$: for any point in $A$ the tester says ``true'', and for any point in the complement of $A$ the tester says ``fake''. A good test recognizes true data as true, and does {\em not} recognize fake data as true, both with a high probability. Thus, the tester's goal is to find a set $A$ that minimizes the probability of error, so that
\begin{equation}    \label{eq: adversarial problem}
\Prob{X \in A} \ge 0.9
\quad \text{and} \quad
\Prob{\exists t \in T:\; X+rt \in A} \le 0.1.
\end{equation}

We have chosen the values $0.9$ and $0.1$ for illustrative purposes only; they can be arbitrary in general.

Using Minkowski addition, the second inequality in \eqref{eq: adversarial problem} can be expressed as $\Prob{X \in A-rT} \le 0.1$. Thus, denoting by $\gamma_n$ the standard Gaussian measure on $\R^n$, the tester's goal is to find a subset $A \subset \R^n$ satistying
\begin{equation}	\label{eq: A}
	\gamma_n(A) \ge 0.9
	\quad \text{and} \quad
	\gamma_n(A-rT) \le 0.1.
\end{equation}
The {\em detectablity radius} we introduced in Question~\ref{q: detectability radius} can be formally defined as the largest radius of an undetectable fake:
$$
r(T) \coloneqq \sup \left\{ r>0:\; \text{there exists no set $A \subset \R^n$ that satisfies \eqref{eq: A}} \right\}.
$$

\subsection{Related work}
A lot of effort has been made to compute the detectability radius for an {\em outsider} adversary whose choice of the vector $t \in T$ is {\em not} allowed to depend on $X$. Instead of \eqref{eq: A}, in the ``outsider adversary'' scenario the tester is satisfied with the following weaker goal: find a subset $A \subset \R^n$ that satisfies
\begin{equation}	\label{eq: A sup outside}
	\gamma_n(A) \ge 0.9
	\quad \text{and} \quad
	\sup_{t \in T} \gamma_n(A-rt) \le 0.1.
\end{equation}
The study of this ``outsider adversary'' problem can be traced back to Tuckey's idea of {\em ``higher criticism''} \cite{tuckey}, see \cite{DJ1, DJ2, DJ3}. Higher criticism was recently revisited from the standpoint of high-dimensional statistics, and was studied for various specific sets $T$ including the spheres in the $\ell^p$ metric \cite{Ingster}, the indicators of paths and more general clusters in a given graph \cite{ACHZ, ACD}, the set of unit sparse vectors \cite{Baraud, Ingster2, DJ1, DJ2, DJ3, CJL, HJ, IPT}, and more generally a set containing all unit vectors with given sparsity patterns \cite{ABDL}. Extensions for sparse regression has been studied in \cite{ACP, ITV, CCCTW, MS}. The only work on the ``insider adversary'' that we know of is the concurrent work \cite{Smirnov}, which studies the detectability radius of the discrete cube $T=\{-1,1\}^n$ and its sparsified versions.

\subsection{Main results}

Returning to our ``insider adversary'' problem, it turns out that in many situations,  the detectability radius $r(T)$ is captured by the quantity that we call the {\em scaled Gaussian width} of $T$. It is defined as follows:
\begin{equation}		\label{eq: scaled Gaussian width}
\bar{w}(T) = \E \sup_{t \in T} \ip*{X}{\frac{t}{\norm{t}_2^2}}
\end{equation}
where $X$ is a standard normal random vector in $\R^n$. This is a scaled version of the more traditional quantity called {\em Gaussian width}, which is 
\begin{equation}		\label{eq: Gaussian width}
w(T) = \E \sup_{t \in T} \ip{X}{t}.
\end{equation}
The Gaussian width describes the complexity of the set $T$. This concept plays an important role in high-dimensional probability, asymptotic convex geometry and high-dimensional inference \cite{V book, AGM1, AGM2}. 

The unorthodox scaling in \eqref{eq: scaled Gaussian width} is justified by the contribution of the points in $T$ near the origin. Since smaller distortions are easier to hide, having such points in $T$ makes the adversary's job easier and the tester's job harder. So a quantity that captures the detectability radius $r(T)$ should be more sensitive to points in $T$ that are close to the origin than to those that are far from the origin. Such a sensitivity is achieved in \eqref{eq: scaled Gaussian width} by dividing by $\norm{t}_2^2$.

The following theorem summarizes our main results. 

\begin{theorem}[Detectability radius, informal]	\label{thm: detectability radius, informal}
	For any set $T \subset \R^n$, we have 
	\begin{equation} \label{eq: critical radius upper}
	r(T) \le 2\bar{w}(T) \left(1+o(1)\right).
	\end{equation}
	Moreover, for any highly symmetric set $T$ we have 
	\begin{equation} \label{eq: critical radius lower}
	r(T) \ge 2\bar{w}(T) \left(1-o(1)\right).
	\end{equation}
\end{theorem}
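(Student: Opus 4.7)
The plan is to introduce the canonical test statistic
\[
\phi(x) \;:=\; \sup_{t\in T}\ip*{x}{\frac{t}{\norm{t}_2^2}},
\qquad
\psi(x) \;:=\; \sup_{t\in T}\ip*{-x}{\frac{t}{\norm{t}_2^2}},
\]
so that $\E\phi(X)=\bar w(T)$ by definition, and $\E\psi(X)=\bar w(T)$ as well since $X\stackrel{d}{=}-X$. For the upper bound \eqref{eq: critical radius upper}, I would take $A:=\{x:\phi(x)\le\tau\}$ with $\tau:=\bar w(T)+\d$, where $\d$ is slightly larger than the Gaussian concentration scale of $\phi$ and $\psi$. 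The Borell--TIS inequality applied to $\phi$, which is $\sup_{t\in T}\norm{t}_2^{-1}$-Lipschitz, gives $\g_n(A)\ge 0.9$. The other half of \eqref{eq: A} comes from the one-line identity
\[
\phi(x+rt)\;\ge\;\ip*{x+rt}{\frac{t}{\norm{t}_2^2}} \;=\; \ip*{x}{\frac{t}{\norm{t}_2^2}}+r,
\]
which shows that $x+rt\in A$ forces $\ip*{x}{t/\norm{t}_2^2}\le\tau-r$; taking the infimum over $t\in T$ gives $x\in A-rT\implies \psi(x)\ge r-\tau$. Hence $\g_n(A-rT)\le\Prob{\psi(X)\ge r-\tau}\le 0.1$ whenever $r-\tau\ge\bar w(T)+\d$, i.e.\ $r\ge 2\bar w(T)+2\d$. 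Choosing $\d=o(\bar w(T))$ yields \eqref{eq: critical radius upper}. Only the symmetry $X\stackrel{d}{=}-X$ and concentration of $\phi,\psi$ are used, so the argument extends verbatim to any centrally symmetric, well-concentrated distribution of $X$.

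For the lower bound \eqref{eq: critical radius lower}, I would construct an explicit randomized adversary. The natural candidate is a \emph{length-preserving shift}: set
\[
T_x\;:=\;\bigl\{t\in T:\;\norm{x+rt}_2=\norm{x}_2\bigr\}\;=\;\bigl\{t\in T:\;\ip*{x}{t/\norm{t}_2^2}=-r/2\bigr\},
\]
and draw $t(x)$ uniformly from $T_x$ in a way equivariant under the symmetry group of $T$. Then $\norm{X+rt(X)}_2=\norm{X}_2\sim\chi_n$ automatically, and the ``high symmetry'' of $T$ is meant to guarantee that the direction $(X+rt(X))/\norm{X+rt(X)}_2$ comes out uniform on $S^{n-1}$ and independent of the length, making $X+rt(X)\sim\g_n$ exactly. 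The strategy is feasible precisely on the event $T_X\ne\emptyset$, which (after rewriting) equals $\{\psi(X)\ge r/2\}$; by concentration of $\psi$ around $\bar w(T)$, this event has probability $1-o(1)$ as soon as $r<2\bar w(T)(1-o(1))$. On the negligible complement any shift will do, so the fake has total-variation distance $o(1)$ from $\g_n$, which is enough to prevent any tester from satisfying \eqref{eq: A}.

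The main obstacle is the lower bound. The upper bound is essentially a one-line inequality plus Gaussian concentration; the only subtlety is that the concentration scales of $\phi$ and $\psi$ must be smaller than $\bar w(T)$, a mild regularity condition absorbed into the ``$1+o(1)$''. The lower bound, by contrast, must defeat \emph{every} tester, and the high-symmetry assumption is what makes this possible: the symmetry group has to (i) admit a measurable equivariant section $x\mapsto t(x)$ of $x\mapsto T_x$, and (ii) act transitively enough on $S^{n-1}$ to force the direction of $X+rt(X)$ to be uniform. Identifying exactly which notion of ``highly symmetric'' supports both steps --- and seeing why \eqref{eq: critical radius lower} fails for merely centrally symmetric $T$, motivating the focused-width conjecture mentioned in the abstract --- is the technical heart of the argument.
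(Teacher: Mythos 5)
Your upper-bound argument is correct and is essentially the paper's proof in different clothing: your set $A=\{x:\phi(x)\le\tau\}$ is the paper's proximity test $A=\frac r2 K$ with $K=\{x:\ip{x}{t}<\norm{t}_2^2\ \forall t\in T\}$, your one-line inequality $\phi(x+rt)\ge\ip{x}{t/\norm{t}_2^2}+r$ is exactly the paper's disjointness claim $\frac r2K\cap(rT-\frac r2K)=\emptyset$, and the concentration step (Borell--TIS for the $1/\rho(T)$-Lipschitz function $\phi$) is identical; the paper likewise absorbs the error term $u/\rho(T)$ into the $o(1)$.

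The lower bound, however, has a genuine gap, and it is precisely at the step you flag as unresolved. Your mechanism is ``preserve the length, then use equivariance under the symmetry group of $T$ to make the direction of $X+rt(X)$ uniform on $S^{n-1}$.'' For the paper's notion of highly symmetric (Definition~\ref{def: highly symmetric set}: closed under fixing the support and increasing the norm), the symmetry group is only the signed permutations/support symmetries together with radial rescalings --- for the flagship example $T_{n,s}$ of $s$-sparse vectors it is nowhere near transitive on the sphere, so no equivariant selection from $T_x=\{t\in T:\ip{x}{t/\norm{t}_2^2}=-r/2\}$ can force a uniform direction, and the claim $X+rt(X)\sim\gamma_n$ does not follow. (There is also a smaller gap: $\psi(x)\ge r/2$ only gives a $t$ with $\ip{-x}{t/\norm{t}_2^2}\ge r/2$; you need the scaling invariance of $T$ to upgrade this to exact equality.) The paper closes the gap with a different, coordinate-wise idea: write $T$ via admissible supports $I\in S$ and minimal norms $\nu(I)$, so that $\bar w(T)=\E\max_{I\in S}\norm{X_I}_2/\nu(I)$; on the high-probability event $\max_I\norm{X_I}_2/\nu(I)\ge r/2$ choose $I$ attaining the max (a function of the absolute values $\abs{X_i}$ only) and add $rt=-2X_I\in rT$, which merely flips the signs of the coordinates in $I$. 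Since the coordinates of $X$ are independent and symmetric and $I$ is determined by $(\abs{X_1},\dots,\abs{X_n})$, conditioning on the absolute values shows $X-2X_I\stackrel{d}{=}X$ (Lemma~\ref{lem: flipping the signs}); no rotational invariance is needed, which is also why the paper's argument extends to non-Gaussian vectors with independent symmetric coordinates (Theorem~\ref{thm: lower non-Gaussian}). Your scheme, as written, would only cover rotation-invariant sets such as $T_n=\{\norm{t}_2\ge1\}$.
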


The first part of Theorem~\ref{thm: detectability radius, informal} identifies the regime in which the fake is detectable. A formal, non-asymptotic version of \eqref{eq: critical radius upper} is provided in Theorem~\ref{thm: upper}. We argue in Section~\ref{s: proximity test} that a simple {\em proximity test} can detect a fake in this regime: if the point is closer to the origin than to any point in $T$, return ``real''; otherwise return ``fake''.
   
The second part of Theorem~\ref{thm: detectability radius, informal} identifies the regime in which there exist undetectable fakes. The meaning of ``highly symmetric'' is explained in  Definition~\ref{def: highly symmetric set}, and a formal, non-asymptotic version of \eqref{eq: critical radius upper} is provided in Theorem~\ref{thm: lower}. We argue in Section~\ref{s: sign flipping} that an undetectable fake can be produced in this regime by a {\em sign flipping strategy}: choose a point $t(X) \in T$ such that adding $rt$ to the outcome $X$ is equivalent to reversing the signs of some coordinates of $X$. 

\subsection{Example: the adversary adds any vector}

To illustrate the intuition behind Theorem~\ref{thm: detectability radius, informal}, consider the most basic example where the adversary is allowed to add to $X$ any vector of length at least $r$. In other words, the set of the adversary's tricks is 
\begin{equation}	\label{eq: Tn}
	T = T_n = \left\{ t \in \R^n:\; \norm{t}_2 \ge 1 \right\}.
\end{equation}
The scaled Gaussian width of $T$ equals the Gaussian width of the unit Euclidean ball in $\R^n$, and it is approximately $\sqrt{n}$, see \cite[Example~7.5.7]{V book}. Thus Theorem~\ref{thm: detectability radius, informal} gives
$$
r(T_n) = 2\sqrt{n}(1+o(1)).
$$

There is a simpler way to obtain the same conclusion. The Euclidean norm of the standard normal random vector $X$ is approximately $\sqrt{n}$ \cite[Example~7.5.3]{V book}. Therefore, if the radius $r$ is significantly larger than $2\sqrt{n}$, adding to $X$ a vector $rt$ for any $t \in T$ would make the norm of the sum significantly larger than $\sqrt{n}$. So the norm of the fake data $X+rt$ must be significantly larger than the typical norm of the real data $X$. This abnormality can be easily detected.

On the other hand, let us take $r=2\sqrt{n}$ and simplify reality a little by pretending that the Euclidean norm of $X$ is exactly $\sqrt{n}$. Then the adversary can make an undetectable fake by adding the vector $-2X \in rT$ to $X$. Indeed, this effectively flips the vector $X$ about the origin, which does not change the distribution of $X$ at all. 

\subsection{Example: the adversary adds a sparse vector}    \label{s: sparsity}

To make the previous example more interesting, imagine that the adversary has limited resources and can change at most $s$ coordinates of $X$. The choice of which coordinates to change is up to the adversary and may depend on $X$. In other words, the adversary's set of tricks is 
\begin{equation}	\label{eq: Tns}
	T = T_{n,s} = \left\{ t \in \R^n:\; \norm{t}_2 \ge 1, \; \norm{t}_0 \le s \right\}.
\end{equation}
Here $\norm{t}_0$ denotes the {\em sparsity} of $t$, which equals the number of nonzero coordinates of $t$. The set $T_{n,s}$ is a highly symmetric, and its scaled Gaussian width satisfies
$$
\bar{w}(T_{n,s}) \asymp \sqrt{s \ln(en/s)}, 
$$
see \cite[Section~10.3.3]{V book}. Here the sign ``$\asymp$'' hides absolute constant factors. Thus, Theorem~\ref{thm: detectability radius, informal} gives
$$
r(T_{n,s}) \asymp \sqrt{s \ln(en/s)}.
$$
The proximity test in this case reduces to checking whether the sum of squares of the largest $s$ coefficients is abnormally large. The sign flipping strategy reduces to reversing the signs of the largest $s$ coefficients.


\subsection{What about non-Gaussian data?}

Real world data is rarely Gaussian. The first part of Theorem~\ref{thm: detectability radius, informal}, which identifies a regime in which the fake is detectable, can be easily generalized to non-Gaussian random vectors $X$. A version of this result, Theorem~\ref{thm: upper non-Gaussian}, holds for a completely arbitrary distribution of $X$. 

As for the second part of Theorem~\ref{thm: detectability radius, informal}, which identifies a regime in which there exist undetectable fakes, extending it to general distributions is trickier. Our proof of this result is based on sign flipping strategy, whose validity explores the symmetries of the distribution of $X$. A more general version this result, Theorem~\ref{thm: lower non-Gaussian}, holds for any random vector $X$ whose coordinates $X_i$ are independent, symmetric, and bounded random variables. It remains an interesting question to what extent we can relax the assumptions of independence, symmetry and boundedness.

\subsection{What about general sets $T$?}

While the first part of Theorem~\ref{thm: detectability radius, informal} is valid for general sets $T$, the second part requires $T$ to be highly symmetric, where the rigorous meaning of ``highly symmetric'' is given in Definition~\ref{def: highly symmetric set}. 

This latter result can fail miserably for general sets $T$: the scaled Gaussian width can hugely overestimate the detectability radius. A simple example is where $T$ consists of all unit vectors in $\R^n$ whose first coordinate is either $1/2$ or $-1/2$. An elementary calculation carried out in Section~\ref{s: non-sharpness} shows that $\bar{w}(T) \asymp \sqrt{n}$ while $r(T) \asymp O(1)$.  

In full generality, we suspect that the detectability radius $r(T)$ must be captured by some geometric quantity $\widetilde{w}(T)$ that is generally smaller than the scaled Gaussian width $\bar{w}(T)$, and which is focused only on the most important directions of $T$.  We can call such quantity the {\em focused Gaussian width}. In Section~\ref{s: focused width} we define the focused Gaussian width and show that 
$$
r(T) \lesssim \widetilde{w}(T) \le \bar{w}(T). 
$$
A fascinating problem remains whether the first inequality can be reversed for a general set $T$, i.e. whether the detectability radius is always equivalent to the focused Gaussian width.

\section{When is a fake detectable?}

Let us begin by showing that a fake is detectable whenever the radius $r$ is significantly larger than $2\bar{w}(T)$, where $\bar{w}(T)$ is the scaled Gaussian width defined in \eqref{eq: scaled Gaussian width}. The error term that quantifies ``significantly larger'' depends on the magnitude of the smallest change the adversary can make. This magnitude is the {\em inradius} of $T$, defined as
\begin{equation}	\label{eq: inradius}
	\rho(T) = \inf_{t \in T} \norm{t}_2.
\end{equation}

\begin{theorem}[When the fake is detectable]				\label{thm: upper}
	Let $X$ be a standard normal random vector in $\R^n$. Let $T \subset \R^n$ be any set and $u>0$ be any number. Assume that
	\begin{equation}	\label{eq: upper r}
		r \ge 2\bar{w}(T) + \frac{u}{\rho(T)}.
	\end{equation}
	Then there exists a set $A \subset \R^n$ satisfying 
	$$
	\Prob{X \in A} > 1-e^{-u^2/8} 
	\quad \text{and} \quad 
	\Prob{X \in A-rT} < e^{-u^2/8}.
	$$
\end{theorem}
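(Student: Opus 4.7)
My plan is to implement the proximity test announced in the introduction. Define
\[
A \;=\; \bigl\{x\in\R^n:\ \|x\|_2\le \|x-rt\|_2\ \text{for every}\ t\in T\bigr\},
\]
that is, $A$ consists of the points that are at least as close to $0$ as to every point of $rT$. Expanding $\|x-rt\|_2^2\ge \|x\|_2^2$ and dividing by $r\|t\|_2^2$ (using that $\rho(T)>0$ so every $t\in T$ is nonzero) gives the clean representation
\[
A \;=\; \Bigl\{x\in\R^n:\ \sup_{t\in T}\Bigl\langle x,\frac{t}{\|t\|_2^2}\Bigr\rangle\le \frac{r}{2}\Bigr\}.
\]

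The central observation is that the random variable $f(X):=\sup_{t\in T}\langle X,\,t/\|t\|_2^2\rangle$ has expectation exactly $\bar w(T)$ and is a Lipschitz function of $X$ with constant $\sup_{t\in T}\|t/\|t\|_2^2\|_2 = 1/\rho(T)$. Gaussian concentration then yields
\[
\Prob{f(X)\ge \bar w(T)+s} \;\le\; \exp\!\bigl(-s^2\rho(T)^2/2\bigr)
\]
for every $s>0$. Choosing $s=u/(2\rho(T))$ makes the right-hand side equal to $e^{-u^2/8}$, and the hypothesis $r\ge 2\bar w(T)+u/\rho(T)$ is precisely $r/2\ge \bar w(T)+s$. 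This immediately gives $\Prob{X\notin A}<e^{-u^2/8}$, settling the first conclusion.

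For the second conclusion, the key trick is to exploit the \emph{same} vector $t$ as both shift and test point. If $X\in A-rT$, pick $t\in T$ with $X+rt\in A$; using $t$ itself in the defining inequality for $A$ yields $\|X+rt\|_2\le \|X\|_2$, which after squaring and rearranging is the same as
\[
\Bigl\langle -X,\frac{t}{\|t\|_2^2}\Bigr\rangle \;\ge\; \frac{r}{2},
\]
so $f(-X)\ge r/2$. Because $X$ is standard normal, $-X$ has the same law as $X$, and the same concentration bound applied to $f(-X)$ gives $\Prob{X\in A-rT}\le \Prob{f(-X)\ge r/2}<e^{-u^2/8}$.

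The only mildly subtle step is the symmetrization at the end --- noticing that substituting $t'=t$ in the definition of $A$ relates the ``fake acceptance'' event to the concentration event for $f(-X)$, and then leveraging Gaussian symmetry to recycle the same tail bound. Everything else is a direct application of Gaussian concentration to the Lipschitz functional whose expectation is, by design, the scaled Gaussian width.
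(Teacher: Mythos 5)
Your proposal is correct and follows essentially the same route as the paper: the same proximity-test set $A=\bigl\{x:\ \sup_{t\in T}\ip{x}{t/\norm{t}_2^2}\le r/2\bigr\}$ and the same application of Gaussian concentration to the $1/\rho(T)$-Lipschitz functional whose mean is $\bar{w}(T)$. The only difference is presentational: you bound $\Prob{X \in A-rT}$ by plugging the same $t$ back into the defining inequality of $A$ and using $-X\overset{d}{=}X$, whereas the paper encodes the same algebra as the disjointness claim $\tfrac{r}{2}K\cap\bigl(rT-\tfrac{r}{2}K\bigr)=\emptyset$; your explicit symmetry step makes the passage between $rT-A$ and $A-rT$ transparent.
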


\begin{proof}
	Consider the set
	$$
	K \coloneqq \left\{ x \in \R^n \;\vert\; \ip{x}{t} < \norm{t}_2^2 \text{ for every } t \in T \right\}.
	$$
	We claim that
	\begin{equation}		\label{eq: disjoint}
  	\frac{K}{2} \cap \Big( T - \frac{K}{2} \Big) = \emptyset.
	\end{equation}
	Indeed, if this were not true, there would exist vectors $x,y \in K$ and $t \in T$ satisfying $x/2=t-y/2$, or $x+y=2t$. Taking the scalar product with $t$ on both sides would give
	\begin{equation}		\label{eq: xyt}
	\ip{x}{t} + \ip{y}{t} = 2\ip{t}{t} = 2\norm{t}_2^2.
	\end{equation}
	On the other hand, the definition of $K$ implies $\ip{x}{t} < \norm{t}_2^2$ and $\ip{y}{t} < \norm{t}_2^2$. Adding these two inequalities creates a contradiction with \eqref{eq: xyt}, and so Claim \eqref{eq: disjoint} is proved. 
	
	We have
	\begin{align*}
		\Prob*{X \in \frac{r}{2} K}
  		&= \Prob*{\sup_{t \in T} \ip*{X}{\frac{t}{\norm{t}_2^2}} < \frac{r}{2}}
		\quad \text{(by definition of $K$)} \\
  		&\ge \Prob*{\sup_{t \in T} \ip*{X}{\frac{t}{\norm{t}_2^2}} < \bar{w}(T)+\frac{u}{2\rho(T)}}
  	\quad \text{(by assumption on $r$)} \\
  		&> 1-e^{-u^2/8},
	\end{align*}
	where the last step follows by applying the Gaussian concentration inequality (see \cite{BLM}) for the $1/\rho(T)$-Lipschitz function $f(x)=\sup_{t \in T} \ip*{x}{t/\norm{t}_2^2}$.
	
	Since \eqref{eq: disjoint} can be rewritten as
	$$
	\frac{r}{2}K \cap \Big( rT - \frac{r}{2}K \Big) = \emptyset,
	$$
	the previous bound yields
	$$
	\Prob*{X \in rT - \frac{r}{2}K}
	\le 1 - \Prob*{X \in \frac{r}{2}K}
	< e^{-u^2/8}.
	$$
	Therefore, the conclusion of the theorem holds for 
	$$
	A = \frac{r}{2}K. 
	$$
	The proof is complete.
\end{proof}

\begin{remark}[The error term is small]
    When we look at the assumption \eqref{eq: upper r}, it is helpful to regard $2\bar{w}(T)$ as the main term and $u/\rho(T)$ as the error term. The error term is typically smaller than the main term, and often much smaller. Indeed, the Gaussian width of any origin-symmetric set $T$ satisfies $ w(T) \ge \sqrt{2/\pi} \sup_{t \in T} \norm{t}_2$, see \cite[Proposition~7.5.2]{V book}. Rescaling and rearranging the terms, we conclude that the scaled Gaussian width of any origin-symmetric set $T$ satisfies
    $$
    \frac{1}{\rho(T)} \le \sqrt{\frac{\pi}{2}} \, \bar{w}(T).
    $$
    Moreover, in most interesting cases $1/\rho(T)$ is much smaller than $\bar{w}(T)$. For example, if $T$ is the unit Euclidean sphere in $\R^n$, then we have 
    $$
    \rho(T)=1 
    \quad \text{while} \quad
    \bar{w}(T) \approx \sqrt{n}.
    $$
    Similarly, if $T_{n,s}$ is the set of $s$-sparse vectors in $\R^n$ of norm at least $1$, which we introduced in Section~\ref{s: sparsity}, then we by \cite[Section~10.3.3]{V book} we have
    $$
    \rho(T)=1
    \quad \text{while} \quad
    \bar{w}(T_{n,s}) \asymp \sqrt{s \ln(en/s)}.
    $$   
\end{remark}

\subsection{How to spot a fake? A proximity test}	\label{s: proximity test}

Theorem~\ref{thm: upper} identifies a regime where a fake can be detected: there is a test that can accurately predict whether a data $x$ has been sampled from the standard normal distribution, or alternatively a point from $rT$ has been added to it. What is this test?

This test is encoded by a subset $A \subset \R^n$: upon seeing a data $x \in \R^n$, the tester returns ``real'' if $x \in A$ and ``fake'' if $x \in A^c$. In  the proof of Theorem~\ref{thm: upper}, we made the following choice of $A$:
$$
A = \frac{r}{2}K 
= \left\{ x \in \R^n \;\vert\; \ip{x}{t} < \frac{r}{2} \norm{t}_2^2 \text{ for every } t \in T \right\}.
$$
Rewriting the inequality above as $\ip{x}{rt} < \frac{1}{2} \norm{rt}_2^2$, we see that it is simply saying that the point $x$ is closer to the origin than to the point $rt$. 
Thus, the proof of Theorem~\ref{thm: upper} yields the following

\smallskip

\begin{quote}
	 {\bf Proximity test.} {\em If the data point $x$ is closer to the origin than to any point in the set $rT$, return ``true''; otherwise return ``fake''.}
\end{quote}

\smallskip

If the radius $r$ satisfies \eqref{eq: upper r}, the proximity test succeeds: the probability of a false positive error and the probability of a false negative error are both bounded by $e^{-u^2/8}$.

\section{When is a fake undetectable?}

Next, let us consider the opposite situation. We will show how the adversary can create an undetectable fake whenever the radius $r$ is significantly larger than $2\bar{w}(T)$, where $\bar{w}(T)$ is the scaled Gaussian width defined in \eqref{eq: scaled Gaussian width}. The error term that quantifies ``significantly larger'' will be exactly the same as in Theorem~\ref{thm: upper} and it will depend on the inradius $\rho(T)$ of $T$, defined in \eqref{eq: inradius}. 

We will only consider highly symmetric sets of tricks $T$ in this regime, and we will  demonstrate in Section~\ref{s: non-sharpness} how the result can fail if $T$ is not highly symmetric.  

\begin{definition}[Highly symmetric set]	\label{def: highly symmetric set}
	We say that a set $T \subset \R^n$ is {\em highly symmetric} if, whenever $T$ contains a point $x$, the set $T$ must also contain any point $y \in \R^n$ that satisfies $\supp(y) = \supp(x)$ and $\norm{y}_2 \ge \norm{x}_2$.
\end{definition}

An example of a highly symmetric set is the set $T_n$ of all vectors in $\R^n$ whose norm is bounded below by $1$, which we considered in \eqref{eq: Tn}. A more general example is the set of all $s$-sparse vectors in $\R^n$ whose Euclidean norm is at least $1$, which we considered in \eqref{eq: Tns}.

\begin{theorem}[Where the fake is undetectable]			\label{thm: lower}
	Let $X$ be a standard normal random vector in $\R^n$. Let $T \subset \R^n$ be a highly symmetric set and $u>0$ be any number. Assume that
	\begin{equation}	\label{eq: lower r}
		0 \le r \le 2\bar{w}(T) - \frac{u}{\rho(T)}.
	\end{equation}
	Then for any set $A \subset \R^n$ we have 
	$$
	\Prob{X \in A-rT}
	\ge \Prob{X \in A} - e^{-u^2/8}.
	$$
\end{theorem}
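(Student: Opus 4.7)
My plan is to exploit the symmetries of the standard Gaussian via a sign-flipping strategy. For $S \subseteq [n]$ and $x \in \R^n$, let $R_S x := x - 2 x_S$ denote the reflection that flips the signs of the coordinates in $S$ (where $x_S$ is the restriction of $x$ to $S$, zero elsewhere). The key observation is that whenever $S$ is a measurable function of $\abs{X} := (\abs{X_1},\ldots,\abs{X_n})$, the vector $R_S X$ has the same distribution as $X$: conditionally on $\abs{X}$ the signs $\sgn(X_i)$ are i.i.d.\ uniform on $\{\pm 1\}$, and multiplying each by a $\pm 1$ dictated by $\abs{X}$ preserves this distribution. Since $R_S X = X + rt$ precisely when $t = -2X_S/r$, the adversary's task reduces to choosing $S = S(\abs{X})$ so that this particular $t$ lies in $T$.

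The next step is to translate $-2X_S/r \in T$ into a geometric condition. Setting $\alpha_S := \inf\{\norm{\tau}_2 : \tau \in T,\ \supp(\tau) = S\}$ and noting that almost surely $\supp(-2X_S/r) = S$ and $\norm{-2X_S/r}_2 = 2\norm{X_S}_2/r$, Definition~\ref{def: highly symmetric set} yields $-2X_S/r \in T$ whenever $\norm{X_S}_2/\alpha_S > r/2$. The core of the proof is then the identity
\[
s^\ast(X) \,:=\, \sup_{t \in T} \ip{X}{t/\norm{t}_2^2} \;=\; \max_{S} \frac{\norm{X_S}_2}{\alpha_S},
\]
which equates the random quantity with expectation $\bar{w}(T)$ to the maximal sign-flip gain. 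The $\le$ direction is Cauchy--Schwarz: any $t \in T$ with $\supp(t) = S$ satisfies $\ip{X}{t}/\norm{t}_2^2 \le \norm{X_S}_2/\norm{t}_2 \le \norm{X_S}_2/\alpha_S$. The $\ge$ direction uses the highly symmetric hypothesis: for any $c > \alpha_S/\norm{X_S}_2$ the vector $cX_S$ has support $S$ and norm strictly greater than $\alpha_S$, hence lies in $T$, and $\ip{X}{cX_S}/\norm{cX_S}_2^2 = 1/c$ can be made arbitrarily close to $\norm{X_S}_2/\alpha_S$.

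Finally I would apply Gaussian concentration and assemble everything. The function $x \mapsto s^\ast(x)$ is $1/\rho(T)$-Lipschitz, being a supremum of linear functionals with coefficient norm at most $1/\rho(T)$. Gaussian concentration gives $\Prob{s^\ast(X) \le \bar{w}(T) - u/(2\rho(T))} \le e^{-u^2/8}$, which together with $r \le 2\bar{w}(T) - u/\rho(T)$ yields $\Prob{E^c} \le e^{-u^2/8}$ for $E := \{s^\ast(X) > r/2\}$. Let $S^\ast = S^\ast(\abs{X}) \in \argmax_S \norm{X_S}_2/\alpha_S$ be a measurable selection (possible since $[n]$ has only finitely many subsets), and set $Y := X - 2 X_{S^\ast}$. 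By the first paragraph $Y$ has the same distribution as $X$, and on $E$ we have $Y = X + r\tau(X)$ with $\tau(X) := -2 X_{S^\ast}/r \in T$ by the identity and the hypothesis on $r$. Hence
\[
\Prob{X \in A - rT} \;\ge\; \Prob{Y \in A,\, E} \;\ge\; \Prob{Y \in A} - \Prob{E^c} \;=\; \Prob{X \in A} - e^{-u^2/8}.
\]

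The main obstacle is establishing the identity above: the inequality $\le$ holds for arbitrary $T$, but $\ge$ genuinely requires that once $T$ contains some vector with support $S$, it also contains every sufficiently long vector of the form $cX_S$ --- which is exactly what Definition~\ref{def: highly symmetric set} provides. This is why the symmetry hypothesis is essential, and why the analogous lower bound is expected to fail for general sets $T$, as Section~\ref{s: non-sharpness} confirms.
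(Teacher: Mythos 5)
Your proposal is correct and follows essentially the same route as the paper: reduce to a sign-flip $X \mapsto X - 2X_{S^\ast}$ with $S^\ast$ chosen as a function of $\abs{X}$ (so the distribution is preserved, as in Lemma~\ref{lem: flipping the signs}), identify $\bar{w}(T)$ with $\E \max_S \norm{X_S}_2/\alpha_S$ using the highly symmetric structure, and apply Gaussian concentration to this $1/\rho(T)$-Lipschitz maximum to get the $e^{-u^2/8}$ error term. The only difference is cosmetic: you spell out the identity between the scaled width and the support-wise maximum (which the paper dismisses as ``a simple computation'') and handle the infimum $\alpha_S$ via strict inequalities rather than assuming the minimum is attained.
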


\begin{proof}[Proof of Theorem~\ref{thm: lower}]
	Definition~\ref{def: highly symmetric set} of a highly symmetric set $T$ implies that the answer to the question ``does a given point $x \in \R^n$ belong to $T$?'' depends only on the support of $x$ and whether the norm of $x$ is sufficiently large. Let $S$ denote the family of subsets of $\{1,\ldots,n\}$ that are supports of points in $T$, that is 
	$$
	S \coloneqq \left\{ \supp(x):\; x \in T \right\}.
	$$
	For every set $I \in S$, let $\nu(I)$ denote the smallest Euclidean norm of a vector in $T$ whose support equals $I$: 
	$$
	\nu(I) \coloneqq \min \left\{ \norm{x}_2:\; x \in T, \; \supp(x)=I \right\}.
	$$
	Then we can express the set $T$ as follows:
	\begin{equation}	\label{eq: T highly symmetric}
	T \coloneqq \left\{ x \in \R^n:\;  \exists I \in S \text{ such that } \supp(x)=I, \; \norm{x}_2 \ge \nu(I) \right\}.
	\end{equation}
A simple computation shows that the Gaussian width and the inradius of $T$ are 
$$
\bar{w}(T) = \E \max_{I \in S} \, \frac{\norm{X_I}_2}{\nu(I)}, \quad
\rho(T) = \min_{I \in S} \, \nu(I), 
$$
where $x_I \in \R^I$ denotes the restriction of a vector $x \in \R^n$ onto the coordinates in $I$.

Apply the Gaussian concentration inequality (see \cite{BLM}) for the $1/\rho(T)$-Lipschitz function $f(x) = \max_{I \in S} \, \norm{x_I}_2/\nu(I)$. We obtain that with probability at least $1-e^{-u^2/8}$, the following holds:
\begin{equation}		\label{eq: max I large}
	\max_{I \in S} \, \frac{\norm{X_I}_2}{\nu(I)} 
	\ge \bar{w}(T)-\frac{u}{2\rho(T)}
	\ge \frac{r}{2},
\end{equation}
where the second inequality is due to the assumption on $r$.

Whenever the event in \eqref{eq: max I large} occurs, there exists a set $I = I(X) \in S$ such that 
$$
\norm{2X_I}_2 \ge r\nu(I).
$$ 
By definition of $T$, this implies that $-2X_I \in rT$, so there exists $t=t(X) \in T$ such that $-2X_I=rt$. Whenever the event \eqref{eq: max I large} does not hold, set $I(X)=I_0$ and $t(X)=t_0$ for some arbitrary but fixed $I_0 \in S$ and $t_0 \in T$. Summarizing, we have constructed a random set $I=I(X) \in S$ and a random point $t=t(X) \in T$ such that 
\begin{equation}	\label{eq: 2XI}
	\Prob*{-2X_I=rt} \ge 1-e^{-u^2/8}.
\end{equation}
Moreover, since the quantity $\norm{X_I}_2$ in the event \eqref{eq: max I large} is determined by the {\em absolute values} of the coefficients of $X$, we can arrange that the set $I$ is determined by the absolute values of the coefficients of $X$.
 
Adding the vector $-2X_I$ to the vector $X$ is equivalent to reversing the signs of the coefficients of $X$ indexed by $I$. And since the choice of $I$ is determined by the absolute values of the coefficients of $X$ and is independent of their signs, reversing the coordinates of $X$ indexed by $I$ does not change the distribution of $X$. (This is a consequence of the symmetry of the Gaussian distribution, which we explain in detail in Lemma~\ref{lem: flipping the signs} below.) Hence $X-2X_I$ has the same distribution as $X$. Therefore, for any set $A \subset \R^n$ we have
\begin{align*}
	\Prob{X \in A}
	&= \Prob{X-2X_I \in A} \\
	&\le \Prob{X-2X_I \in A \text{ and } -2X_I=rt} + \Prob{-2X_I \ne rt} \\
	&\le \Prob{X+rt \in A} + e^{-u^2/8},
\end{align*}
where the in last step we used \eqref{eq: 2XI}. Since $t \in T$, rearranging the terms completes the proof. 
\end{proof}

Let us now explain the crucial fact used in the proof above, namely that sign flipping does not change the distribution. 

\begin{lemma}[Sign flipping]			\label{lem: flipping the signs}
	Let $X=(X_1,\ldots,X_n)$ be a random vector in $\R^n$ whose coordinates $X_i$ are independent and have symmetric distributions.\footnote{A random variable $\xi$ has symmetric distribution if $\xi$ has the same distribution as $-\xi$.} Let $\theta = (\theta_1,\ldots,\theta_n) \in \{-1,1\}^n$ be a random vector whose value is determined by the absolute values of the coefficients of $X$. Then the random vector $(\theta_1 X_1, \ldots, \theta_n X_n)$ has the same distribution as $X$. 
\end{lemma}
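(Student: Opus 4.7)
The plan is to condition on the vector of absolute values $\abs{X} := (\abs{X_1},\ldots,\abs{X_n})$ and exploit the fact that the sign pattern of $X$ is a Rademacher vector independent of $\abs{X}$. Once this conditional independence is in place, flipping signs by any function of $\abs{X}$ will preserve the joint distribution, since multiplying a Rademacher vector by a deterministic sign vector coordinate-wise returns a Rademacher vector.

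First I would decompose $X_i = \e_i \abs{X_i}$, where $\e_i := \sgn(X_i)$ with a fixed convention on the event $\{X_i=0\}$ (for instance $\e_i=+1$, which is harmless because then $\theta_i X_i = X_i = 0$ as well). The symmetry of the distribution of $X_i$ implies that $\e_i$ is uniform on $\{-1,+1\}$ and independent of $\abs{X_i}$; combined with the independence of the coordinates of $X$, this shows that $\e := (\e_1,\ldots,\e_n)$ is an i.i.d.\ Rademacher vector independent of the entire vector $\abs{X}$. Next, since $\theta$ is by hypothesis a measurable function of $\abs{X}$, it is independent of $\e$. Now condition on $\abs{X}$: the vector $\theta$ becomes a fixed element of $\{-1,+1\}^n$, while $\e$ retains its Rademacher law. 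Because the Rademacher distribution on $\{-1,+1\}^n$ is invariant under any deterministic coordinate-wise sign flip, the vector $(\theta_1 \e_1, \ldots, \theta_n \e_n)$ has the same conditional distribution as $\e$ given $\abs{X}$. Multiplying componentwise by $\abs{X_i}$ gives
\[
(\theta_i X_i)_{i=1}^n = (\theta_i \e_i \abs{X_i})_{i=1}^n \stackrel{d}{=} (\e_i \abs{X_i})_{i=1}^n = X \quad \text{conditional on } \abs{X},
\]
and integrating over the distribution of $\abs{X}$ yields the unconditional equality in distribution.

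There is essentially no significant obstacle here: the lemma is the standard fact that a product of independent symmetric coordinates factors as (absolute value) $\times$ (independent Rademacher sign). The only minor care point is the convention on $\{X_i=0\}$, which is genuinely a null event when the $X_i$ are atomless and otherwise absorbed by the convention above, and the measurability of $\theta$ as a function of $\abs{X}$, which is implicit in the hypothesis.
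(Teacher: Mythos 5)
Your proof is correct and takes essentially the same route as the paper's: condition on the absolute values, note that the signs are conditionally i.i.d.\ Rademacher while $\theta$ is then fixed, use invariance of the Rademacher law under a deterministic sign flip, and multiply back by $\abs{X_i}$. The only difference is your explicit convention on the event $\{X_i=0\}$, which the paper leaves implicit.
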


\begin{proof}
	The random vector of interest has coefficients
	$$
	\theta_i X_i = \theta_i \sgn(X_i) \abs{X_i}.
	$$
	Condition on the absolute values of the coefficients of $X$. This fixes the values of $\abs{X_i}$ and $\theta_i$, while by symmetry $\sgn(X_i)$ are (conditionally) independent Rademacher random variables. Since the signs $\theta_i$ are now fixed, the symmetry of Rademacher distribution implies that 
	$$
	\theta_i \sgn(X_i) \equiv \sgn(X_i),
	$$
	where we use the sign ``$\equiv$'' to indicate the equality of (conditional) joint distributions of the coefficients. Multiplying by the fixed numbers $\abs{X_i}$, we get 
	$$
	\theta_i \sgn(X_i) \abs{X_i} \equiv \sgn(X_i) \abs{X_i}. 
	$$
	In other words, 
	$$
	\theta_i X_i \equiv X_i.
	$$
	Since the conditional distributions are equal almost surely (in fact, deterministically), the original distributions are equal, too. 
\end{proof}

\subsection{Undetectability}	\label{s: undetectability}
Let us explain why the conclusion of Theorem~\ref{thm: lower} can be interpreted as existence of undetectable fakes. 

Any fake detection test can be encoded by a subset $A \subset \R^n$: upon seeing a data $x \in \R^n$, the tester returns ``real'' if $x \in A$ and ``fake'' if $x \in A^c$. 
Rewriting the conclusion of Theorem~\ref{thm: lower} as
$$
\Prob{X \in A^c} + \Prob{X \in A-rT} \ge 1-e^{-u^2/8},
$$
we see that the two probabilities on the left hand side can not be simultaneously small: we must have 
$$
\min \Big( \Prob{X \in A^c}, \, \Prob{X \in A-rT} \Big)
\ge \frac{1}{2}-\frac{1}{2}e^{-u^2/8}.
$$
This means that no test can reliably detect a fake: either the false positive rate or the false negative rate must be close to $50\%$ or higher.

\subsection{How to evade detection? A sign flipping strategy}	\label{s: sign flipping}

Theorem~\ref{thm: lower} tells us that for any radius $r$ below a certain threshold, the adversary has a strategy to create an undetectable. What is this strategy? 

To create a fake, the adversary looks at the true data $x$, picks a point $t=t(x) \in T$, and outputs $x+rt$. The proof of Theorem~\ref{thm: lower} contains a recipe for choosing $t$. We express the set $T$ via \eqref{eq: T highly symmetric}, find a set of indices
$$
I = \argmax_{I \in S} \frac{\norm{x_I}_2}{\nu(I)}
$$
and choose $t \in T$ such that $rt = -2x_I$. (And if such $t$ does not exist, the adversary gives up.) Adding such $rt$ to $x$ is equivalent to reversing the signs of the coefficients of $x$ indexed by $I$. Thus, the proof of Theorem~\ref{thm: lower} yields the following strategy for the adversary:

\smallskip

\begin{quote}
	{\bf Sign flipping strategy.} {\em Given a data point $x$ and a set $T$ as in \eqref{eq: T highly symmetric}, choose a set of indices $I$ that maximizes the ratio $\norm{x_I}_2/\nu(I)$ and reverse the signs of the coefficients of $x$ indexed by $I$. Do this only if such a reversal can be realized as adding some point from $rT$ to $x$. Otherwise give up.}
\end{quote}

\smallskip

If the radius $r$ satisfies \eqref{eq: lower r}, then the sign flipping strategy succeeds with high probability. It creates fake data that is statistically indistinguishable from the true data. As we pointed out in Section~\ref{s: undetectability}, for any test either the false positive rate or the false negative rate must be close to $50\%$ or higher.

\section{Extensions for non-Gaussian data}

Since real world data is rarely Gaussian, it is natural to wonder whether our results  generalize to random vectors $X$ with general distributions. 

Particularly straightforward is the generalization of Theorem~\ref{thm: upper}, which identifies the regime in which fakes are detectable. Consider straightforward generalizations of the concepts of Gaussian width \eqref{eq: Gaussian width} and scaled Gaussian width \eqref{eq: scaled Gaussian width}, in which the standard normal random vector $X$ is replaced with a given arbitrary random vector $X$ taking values in $\R^n$. This leads to the concept of {\em $X$-width} and {\em scaled $X$-width}, respectively:
$$
w_X(T) = \E \sup_{t \in T} \ip*{X}{t}
\quad \text{and} \quad
\bar{w}_X(T) = \E \sup_{t \in T} \ip*{X}{\frac{t}{\norm{t}_2^2}}.
$$ 
Then Theorem~\ref{thm: upper} generalizes as follows.

\begin{theorem}[Where the fake is detectable: general distributions]				\label{thm: upper non-Gaussian}
	Let $X$ be any random vector taking values in $\R^n$. 
	Let $T \subset \R^n$ be any origin-symmetric set\footnote{The assumption that $T$ is origin symmetric, i.e. that $T=-T$, is a bit annoying, but it is unavoidable in this statement, since a set $T$ consisting of a single point must satisfy $\bar{w}(T)=0$. However, it is easy to get around this assumption by replacing any set $T$ with the origin-symmetric set $T \cup -T$.} and $u>0$ be any number. Assume that
	$$
	r > 2 u \cdot \bar{w}_X(T).
	$$
	Then there exists a set $A \subset \R^n$ such that 
	$$
	\Prob{X \in A} > 1-1/u 
	\quad \text{and} \quad 
	\Prob{g \in A-rT} < 1/u.
	$$
\end{theorem}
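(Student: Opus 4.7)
The plan is to mimic the proof of Theorem~\ref{thm: upper}, replacing the Gaussian concentration inequality with Markov's inequality. The origin-symmetry hypothesis on $T$ plays two roles: it makes the relevant supremum random variable nonnegative (so that Markov applies), and it ensures that the set $A$ we construct is itself origin-symmetric, which reconciles $A-rT$ with $rT-A$ (they would generally differ for a non-symmetric distribution of $X$).

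First, I would recycle the setup from the Gaussian case. Define
$$
K \coloneqq \left\{ x \in \R^n : \ip{x}{t} < \norm{t}_2^2 \text{ for every } t \in T \right\}
$$
and take $A \coloneqq (r/2)K$. The same elementary argument as in Theorem~\ref{thm: upper} gives the disjointness $(r/2)K \cap (rT-(r/2)K) = \emptyset$. Using $T=-T$, one immediately checks that $K$ (and hence $A$) is origin-symmetric, since $\ip{-x}{t} = \ip{x}{-t} < \norm{-t}_2^2 = \norm{t}_2^2$ whenever $x \in K$. Together with $T=-T$, this yields the identity $A-rT = rT-A$, so it suffices to bound $\Prob{X \in rT-A}$.

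Next, I would identify the event $\{X \in A\}$ with $\{Z < r/2\}$, where $Z \coloneqq \sup_{t \in T} \ip*{X}{t/\norm{t}_2^2}$. Since $T$ is origin-symmetric, each $\ip{X}{t/\norm{t}_2^2}$ and its negative both appear in the supremum, so $Z \ge 0$ pointwise. Markov's inequality then gives
$$
\Prob{Z \ge r/2} \le \frac{2\,\bar{w}_X(T)}{r} < \frac{1}{u},
$$
where the last step uses the hypothesis $r > 2u\,\bar{w}_X(T)$. Hence $\Prob{X \in A} > 1-1/u$, and disjointness yields $\Prob{X \in A-rT} = \Prob{X \in rT-A} \le 1-\Prob{X \in A} < 1/u$.

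There is no genuine obstacle here; the argument is almost mechanical once one recognizes that origin-symmetry of $T$ is precisely what licenses replacing Gaussian concentration by Markov's inequality, and that the same origin-symmetry resolves the asymmetry between $A-rT$ and $rT-A$ which would otherwise appear for an arbitrary distribution of $X$. The price of this generality is the quantitatively weaker tail $1/u$ in place of $e^{-u^2/8}$, which is exactly what one expects from a first-moment argument.
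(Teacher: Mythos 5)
Your proof is correct and is essentially the paper's own argument: the same set $K$ and disjointness claim as in Theorem~\ref{thm: upper}, with Markov's inequality replacing Gaussian concentration and the origin-symmetry of $T$ ensuring the supremum $\sup_{t\in T}\ip{X}{t/\norm{t}_2^2}$ is nonnegative. Your extra observation that $T=-T$ forces $K$ (hence $A$) to be origin-symmetric, so that $A-rT=rT-A$ even for a non-symmetric distribution of $X$, is a worthwhile point that the paper's one-line proof leaves implicit.
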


\begin{proof}
	Follow the proof of Theorem~\ref{thm: upper} but use Markov's inequality instead of the Gaussian concentration inequality. We assumed that $T$ is origin-symmetric to make sure that the random variable $\sup_{t \in T} \ip*{X}{t/\norm{t}_2^2}$ takes only non-negative values, which makes Markov's inequality applicable. 
\end{proof}

As for Theorem~\ref{thm: upper}, which identifies the regime in which fakes are detectable, it is less clear to which non-Gaussian distributions it can be generalized. The proof of Theorem~\ref{thm: upper} uses a sign-flipping strategy that relies crucially on certain symmetries of the Gaussian distribution. It can be extended to any distribution that is sufficiently symmetric:

\begin{theorem}[Where the fake is undetectable: non-Gaussian data]			\label{thm: lower non-Gaussian}
	Let $X=(X_1,\ldots,X_n)$ be a random vector whose coordinates $X_i$ are independent, symmetric random variables taking values in the interval $[-1,1]$. Let $T \subset \R^n$ be a highly symmetric set and $u>0$ be any number. Assume that
	$$
	r \le 2\bar{w}(T) - \frac{u}{\rho(T)}.
	$$
	Then for any set $A \subset \R^n$ we have 
	$$
	\Prob{X \in A-rT}
	\ge \Prob{X \in A} - 2e^{-u^2/16}.
	$$
\end{theorem}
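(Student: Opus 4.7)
The plan is to mimic the proof of Theorem~\ref{thm: lower}, replacing the Gaussian concentration inequality by one that exploits boundedness instead of Gaussianity. The sign-flipping half of the argument transfers verbatim, since Lemma~\ref{lem: flipping the signs} already requires only that the coordinates of $X$ be independent and symmetric; only the concentration step needs a new input, and this is where the hypothesis $X_i \in [-1,1]$ enters.

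First I would reuse the structural description \eqref{eq: T highly symmetric}: with $S = \{\supp(x) : x \in T\}$ and $\nu(I) = \min\{\norm{x}_2 : x \in T,\, \supp(x)=I\}$, the same computation as in the Gaussian case yields $\bar{w}(T) = \E \max_{I \in S} \norm{X_I}_2/\nu(I)$ and $\rho(T) = \min_{I \in S} \nu(I)$ (with $\bar{w}(T)$ here interpreted as the scaled $X$-width $\bar{w}_X(T)$). Crucially, the function $f(x) = \max_{I \in S} \norm{x_I}_2/\nu(I)$ is a maximum of convex functions, hence convex, and it is $(1/\rho(T))$-Lipschitz with respect to the Euclidean norm.

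The key new ingredient is Talagrand's convex concentration inequality, which applies precisely because $f$ is convex and Lipschitz and the coordinates of $X$ are independent and bounded in $[-1,1]$. It produces a sub-Gaussian lower tail of the form
$$
\Prob{f(X) \le \E f(X) - t} \le 2 \exp\bigl(-c\, t^2\, \rho(T)^2\bigr),
$$
with an absolute constant $c$ for which substituting $t = u/(2\rho(T))$ yields the failure probability $2 e^{-u^2/16}$ promised in the conclusion. On the complementary event the assumption on $r$ forces $\max_{I \in S} \norm{X_I}_2/\nu(I) \ge r/2$, so there is some $I=I(X) \in S$ with $\norm{-2X_I}_2 \ge r\nu(I)$, i.e. $-2X_I = rt$ for some $t = t(X) \in T$. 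Exactly as in the proof of Theorem~\ref{thm: lower}, $I$ may be chosen to depend only on $(\abs{X_1}, \ldots, \abs{X_n})$.

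The remainder of the proof is unchanged: Lemma~\ref{lem: flipping the signs} applies to our $X$, so $X - 2X_I$ has the same distribution as $X$, and hence
$$
\Prob{X \in A} = \Prob{X - 2X_I \in A} \le \Prob{X + rt \in A} + 2e^{-u^2/16} \le \Prob{X \in A - rT} + 2e^{-u^2/16}.
$$
The main obstacle is the concentration step: Gaussian concentration is no longer available, and a naive bounded-differences estimate (McDiarmid) would produce dimension-dependent constants, because $f$ has a small Euclidean Lipschitz constant but only a $\sqrt{n}$-sized $\ell^\infty$-Lipschitz constant. Convexity of $f$ is what saves the argument, allowing Talagrand's inequality to restore the dimension-free rate, at the mild cost of a slightly weaker constant ($2e^{-u^2/16}$ rather than $e^{-u^2/8}$).
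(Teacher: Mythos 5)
Your proposal is correct and follows exactly the route the paper takes: its proof of Theorem~\ref{thm: lower non-Gaussian} is literally the instruction to repeat the proof of Theorem~\ref{thm: lower}, replacing Gaussian concentration by Talagrand's convex concentration inequality, which is precisely what you do (and you rightly note that Lemma~\ref{lem: flipping the signs} already covers independent symmetric coordinates, and that $\bar{w}(T)$ must be read as the scaled $X$-width for the concentration step to make sense). Your filled-in details, including the convexity and $1/\rho(T)$-Lipschitz property of $f$ and the choice of $I$ depending only on the absolute values of the coordinates, match the paper's argument.
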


\begin{proof}
	Follow the proof of Theorem~\ref{thm: lower} but use Talagrand's convex concentration inequality \cite[Theorem~6.6]{Talagrand} instead of the Gaussian concentration inequality.
\end{proof}

\section{Is the detectability radius equivalent to the focused width?}	\label{s: focused width}

\subsection{The upper bound is not always sharp}		\label{s: non-sharpness}

Theorem~\ref{thm: upper} says that a fake can be detected if the radius $r$ is somewhat larger than $2\bar{w}(T)$. Theorem~\ref{thm: lower} demonstrates that this result is optimal for for highly symmetric sets $T$. 

If a set $T$ does not have enough symmetries, Theorem~\ref{thm: lower} can fail. Consider, for example, the set of all unit vectors whose first coordinate equals $\pm 1/2$, that is
\begin{equation}	\label{eq: T non-sharpness}
	T = \left\{ t \in \R^n:\; \norm{t}_2 = 1, \; \abs{t_1} = \frac{1}{2} \right\}.
\end{equation}
A simple calculation yields
$$
\bar{w}(T) \asymp \sqrt{n},
$$ 
where the ``$\asymp$'' sign hides absolute constant factors. Thus, Theorem~\ref{thm: upper} says that fake detection is possible whenever the radius satisfies $r \gtrsim \sqrt{n}$.

However, this result is too conservative. The detection in this example is possible even if the radius is an {\em absolute constant}. For example, if $r=100$, the first coordinate of the fake vector $X+rt$ is 
$$
(X+rt)_1 = X_1 \pm 50.
$$
Comparing this to the first coordinate of the real vector $X$, which is $X_1 \sim N(0,1)$, we see that the first coordinate of any fake vector must have a huge non-zero mean. This can be easily tested. Hence the scaled Gaussian width vastly overestimates the detection radius: 
$$
r(T) = O(1)
\quad \text{while} \quad
\bar{w}(T) \asymp \sqrt{n},
$$
This violates Theorem~\ref{thm: lower}.

\subsection{Focused width}

The example above makes us wonder what causes the scaled Gaussian to overestimate the detection radius, and how to fix this problem. The definition of the Gaussian width takes into account all directions $t \in T$, while not all directions are equally useful to an adversary: some tricks $t \in T$ may be more ``revealing'' and thus less useful. We wonder if forcing the Gaussian width to {\em focus} on the most important directions might solve the attention problem. 

This motivates the following definition of the {\em focused Gaussian width}. To make it more broadly applicable, let us define it not just for Gaussian $X$ but for general distributions. 

\begin{definition}[Focused Gaussian width]
	The {\em focused Gaussian width} of $T$ is defined as 
	$$
	\widetilde{w}(T) = \inf_S w(S)
	$$
	where the infimum is over all origin-symmetric sets $S \subset \R^n$ satisfying
	\begin{equation}	\label{eq: focused width S}
		\forall t \in T \; \exists s \in S :\; \ip{t}{s} \ge 1.
	\end{equation}
	In other words, $\widetilde{w}(T)$ is the smallest Gaussian width of an origin-symmetric set whose polar is disjoint from $T$. 
\end{definition}

To make it more broadly applicable, if $X$ is an arbitrary random vector taking values in $\R^n$, the {\em focused $X$-width} is defined as
$$
\widetilde{w}_X(T) = \inf_S w_X(S).
$$

The set $S$ in this definition encodes the directions of ``focus'', and taking the infimum over $S$ encourages the width to focus on the most important directions. 

To compare with the scaled width, it is convenient to rewrite the definition of the focused $X$-width equivalently as follows: 
\begin{equation}	\label{eq: focused width alternative}
	\widetilde{w}_X(T) = \inf_H \bar{w}_X(H)
\end{equation}
where the infimum is over all measurable sets $H \subset \R^n$ satisfying
\begin{equation}	\label{eq: polar condition}
	\forall t \in T \; \exists h \in H:\; \ip{t}{h} \ge \norm{h}_2^2.
\end{equation}
To check the equivalence, choose $h=s/\norm{s}_2^2$, or $s=h/\norm{h}_2^2$.

Choosing $H=T$ and $h=t$ in \eqref{eq: polar condition}, we immediately obtain:

\begin{lemma}[Focusing can only reduce the width]
	For any random vector $X$ taking values in $\R^n$ and for any set $T \subset \R^n$, we have
	$$
	\widetilde{w}_X(T) \le \bar{w}_X(T).
	$$
\end{lemma}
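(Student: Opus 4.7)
The plan is to prove the lemma as a one-line consequence of the alternative characterization \eqref{eq: focused width alternative} of the focused $X$-width. In that formulation, $\widetilde{w}_X(T)$ is the infimum of $\bar{w}_X(H)$ over measurable sets $H \subset \R^n$ satisfying the polar condition \eqref{eq: polar condition}: for every $t \in T$ there exists $h \in H$ with $\ip{t}{h} \ge \norm{h}_2^2$. My strategy is to simply exhibit $T$ itself as an admissible choice of $H$ in this infimum.

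To check admissibility, fix any $t \in T$ and take the witness $h \coloneqq t$, which lies in the candidate set $H = T$. Then
$$
\ip{t}{h} = \ip{t}{t} = \norm{t}_2^2 = \norm{h}_2^2,
$$
so \eqref{eq: polar condition} holds (with equality). Hence $H = T$ is admissible in \eqref{eq: focused width alternative}, and taking the infimum over all admissible $H$ immediately gives
$$
\widetilde{w}_X(T) = \inf_H \bar{w}_X(H) \le \bar{w}_X(T).
$$

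There is essentially no obstacle here: the entire content of the lemma is already packaged into the equivalent formulation \eqref{eq: focused width alternative}. The only step that deserves a word of care is the equivalence of the two definitions of $\widetilde{w}_X$, which relies on the bijection $s \leftrightarrow h = s/\norm{s}_2^2$ (already noted in the excerpt): this substitution converts the condition $\ip{t}{s} \ge 1$ into $\ip{t}{h} \ge \norm{h}_2^2$, and identifies $w_X(S) = \E\sup_{s \in S}\ip{X}{s}$ with $\bar{w}_X(H) = \E\sup_{h \in H}\ip{X}{h/\norm{h}_2^2}$. Granting this reformulation, the admissibility argument above completes the proof without further computation.
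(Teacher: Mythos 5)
Your proposal is correct and is exactly the paper's argument: the paper also proves the lemma by choosing $H=T$ and $h=t$ in \eqref{eq: polar condition} and appealing to the alternative formulation \eqref{eq: focused width alternative}. Nothing further is needed.
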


\subsection{Does focus help?}

The next result states that Theorem~\ref{thm: upper} can be strengthened by replacing the width with the focused width.

\begin{theorem}[Where the fake is detectable: focused width]	\label{thm: upper focused}
	Let $X$ be any random vector taking values in $\R^n$. 
	Let $T \subset \R^n$ be any origin-symmetric set and $u>0$ be any number. Let
	$$
	r > 2 u \cdot \widetilde{w}_X(T).
	$$
	Then there exists a set $A \subset \R^n$ such that 
	$$
	\Prob{X \in A} > 1-1/u 
	\quad \text{and} \quad 
	\Prob{X \in A-rT} < 1/u.
	$$
\end{theorem}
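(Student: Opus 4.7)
The plan is to repeat the proof template of Theorem~\ref{thm: upper non-Gaussian}, but to replace the role of $T$ in the construction of the test set $A$ by a near-optimal \emph{focusing set} $H$ supplied by the alternative form \eqref{eq: focused width alternative} of the focused $X$-width. The observation behind this is that the disjointness step at the heart of the proof of Theorem~\ref{thm: upper} never uses anything about $t \in T$ beyond the trivial inequality $\ip{t}{t} = \norm{t}_2^2$; condition \eqref{eq: polar condition} is precisely the minimal hypothesis that lets one substitute for each $t$ a different (and potentially much narrower) witness $h$.

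Concretely, I will first fix $\varepsilon > 0$ small enough that $r > 2u(\widetilde{w}_X(T)+\varepsilon)$, and then invoke \eqref{eq: focused width alternative} -- together with the origin-symmetry built into the original definition of $\widetilde{w}_X$ -- to choose a measurable, origin-symmetric set $H \subset \R^n$ satisfying the focusing property \eqref{eq: polar condition} and the width bound $\bar{w}_X(H) < \widetilde{w}_X(T)+\varepsilon$. In analogy with the proof of Theorem~\ref{thm: upper}, I will then define
$$
K \coloneqq \left\{x \in \R^n : \ip{x}{h} < \norm{h}_2^2 \text{ for every } h \in H\right\}
$$
and take $A \coloneqq \tfrac{r}{2} K$; origin-symmetry of $H$ makes $K$, and hence $A$, origin-symmetric as well.

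The disjointness $A \cap (rT - A) = \emptyset$ will then go through almost verbatim: if $x + y = 2t$ with $x, y \in K$ and $t \in T$, the focusing property supplies an $h \in H$ with $\ip{t}{h} \ge \norm{h}_2^2$, and taking the inner product of $x+y=2t$ with this $h$ yields $\ip{x}{h} + \ip{y}{h} \ge 2\norm{h}_2^2$, contradicting the two strict inequalities in the definition of $K$. Since $H$ is origin-symmetric, the random variable $\sup_{h \in H} \ip{X}{h/\norm{h}_2^2}$ is almost surely nonnegative, so Markov's inequality will give $\Prob{X \notin A} \le 2\bar{w}_X(H)/r < 1/u$. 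Combining this with the disjointness and the identity $A - rT = rT - A$ (which holds because both $T$ and $A$ are origin-symmetric) will produce $\Prob{X \in A-rT} \le \Prob{X \notin A} < 1/u$.

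The main thing I expect to have to watch carefully is the bookkeeping of origin-symmetries: the applicability of Markov's inequality relies on $-H = H$, and the reduction of the event $\{X \in A-rT\}$ to the disjointness event $\{X \in rT-A\}$ relies on $-A = A$ together with $-T = T$. Since both facts follow automatically from the definition of $\widetilde{w}_X$ and the hypothesis that $T$ is origin-symmetric, I do not anticipate any genuine mathematical obstacle beyond this bookkeeping and the passage to $\varepsilon \to 0$ at the end.
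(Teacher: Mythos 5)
Your proposal is correct and follows essentially the same route as the paper: choose a near-optimal focusing set $H$ via \eqref{eq: focused width alternative}, build $K=\{x:\ip{x}{h}<\norm{h}_2^2 \ \forall h\in H\}$, prove the disjointness $\tfrac{K}{2}\cap(T-\tfrac{K}{2})=\emptyset$ using the witness $h$ from \eqref{eq: polar condition}, and finish with Markov's inequality applied to $\sup_{h\in H}\ip{X}{h/\norm{h}_2^2}$ for $A=\tfrac{r}{2}K$. Your extra bookkeeping (taking $H$ origin-symmetric so that the supremum is nonnegative and so that $A-rT=rT-A$) is a welcome tightening of details the paper's proof passes over silently, but it is not a different argument.
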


\begin{proof}
	By the assumption on $r$ and the alternative definition of the focused width \eqref{eq: focused width alternative}, there exists a set $H \subset \R^n$ satisfying \eqref{eq: polar condition} and such that 
	$$
	r > 2u \cdot \bar{w}_X(H).
	$$

	Consider the set
	$$
	K \coloneqq \left\{ x \in \R^n \;\vert\; \ip{x}{h} < \norm{h}_2^2 \text{ for every } h \in H \right\}.
	$$
	We claim that
	\begin{equation}		\label{eq: disjoint H}
  	\frac{K}{2} \cap \Big( T - \frac{K}{2} \Big) = \emptyset.
	\end{equation}
	Indeed, if this were not true, there would exist vectors $x,y \in K$ and $t \in T$ such that $x/2=t-y/2$, or $x+y=2t$. By \eqref{eq: polar condition}, we can find a vector $h \in H$ satisfying 
	$$
	\ip{t}{h} \ge \norm{h}_2^2.
	$$
	Taking the scalar product with $h$ on both sides of the identity $x+y=2t$ would give
	\begin{equation}		\label{eq: xyh}
	\ip{x}{h} + \ip{y}{h} = 2\ip{t}{h} \ge 2\norm{h}_2^2.
	\end{equation}
	On the other hand, by definition of $K$ we have $\ip{x}{h} < \norm{h}_2^2$ and $\ip{y}{h} < \norm{h}_2^2$. This contradicts \eqref{eq: xyh}, and so Claim \eqref{eq: disjoint H} is proved. 
	
	Then	
	\begin{align*}
		\Prob*{X \in \frac{r}{2} K}
  		&= \Prob*{\sup_{h \in H} \ip*{X}{\frac{h}{\norm{h}_2^2}} < \frac{r}{2}}
		\quad \text{(by definition of $K$)} \\
  		&\ge \Prob*{\sup_{h \in H} \ip*{X}{\frac{h}{\norm{h}_2^2}} < u \cdot \bar{w}_X(H)}
  	\quad \text{(by assumption on $r$)} \\
  		&> 1-1/u,
	\end{align*}
	where the last step follows by applying Markov's inequality. Note that the assumption that $T$ is origin-symmetric guarantees that the random variable $\sup_{t \in T} \ip*{X}{t/\norm{t}_2^2}$ takes only non-negative values, which makes Markov's inequality applicable. 
	
	Since \eqref{eq: disjoint H} can be rewritten as
	$$
	\frac{r}{2}K \cap \Big( rT - \frac{r}{2}K \Big) = \emptyset,
	$$
	the previous bound yields
	$$
	\Prob*{X \in rT - \frac{r}{2}K}
	\le 1 - \Prob*{X \in \frac{r}{2}K}
	< 1/u.
	$$
	Therefore, the conclusion of the theorem holds for 
	$$
	A = \frac{r}{2}K. 
	$$
	The proof is complete.
\end{proof}

\subsection{Revisiting a challenging example} 
Let us revisit the example from Section~\ref{s: non-sharpness}, which shows that the usual Gaussian width can vastly overestimate the detection radius. There we considered the set
$$
T = \left\{ x \in \R^n:\; \norm{x}_2 = 1, \; \abs{x_1} = \frac{1}{2} \right\}.
$$
 	We noticed a discrepancy: the scaled Gaussian width of $T$ is of the order of $\sqrt{n}$, while the fake detection is possible even when the radius $r$ is around an absolute constant.
	
	Let us now compute the {\em focused} Gaussian width of $T$. Consider the set 
	$$
	S = \left\{ -2e_1, 2e_1 \right\}
	$$
	where $e_1 = (1,0,0,\ldots,0)$. 	
	The condition \eqref{eq: focused width S} is satisfied, and so 
	$$
	\widetilde{w}(T) \le w(S) = 2 \E \abs{g} = 2 \sqrt{\frac{2}{\pi}}.
	$$
	So the focused Gaussian width of $T$ is $O(1)$ -- much smaller than the usual Gaussian width. 
	
	As opposed to the Gaussian width, the focused Gaussian width correctly estimates the detectability radius in this example. Indeed, Theorem~\ref{thm: upper focused} guarantees that the fake detection is possible if the radius is of the order of $O(1)$. In fact, we described such a test in Section~\ref{s: non-sharpness}: check if the magnitude of the first coordinate is abnormally large.

\subsection{A conjecture}

Inspired by the example above, one can wonder if the focused Gaussian width is always equivalent to the detectability radius, i.e. 
$$
r(T) \asymp \widetilde{w}(T).
$$
Theorem~\ref{thm: upper focused} gives $r(T) \lesssim \widetilde{w}(T)$. It remains a question whether this inequality can always be reversed. In other words, does a version of Theorem~\ref{thm: lower} hold for any origin-symmetric set $T$ if we replace the scaled Gaussian width $\bar{w}(T)$ with the focused Gaussian width $\widetilde{w}(T)$? 

Finally,
we conjecture that if detection is feasible for a particular value $r_{1}$, then it remains feasible for all values 
$ r>r_{1}$; however, we currently lack a formal proof of this claim.

\subsection{Acknowledgment} The authors sincerely thank the anonymous referees for their valuable comments, which helped improve this paper.

\end{document}